\newtheorem{thm}{Theorem}[section]
\newtheorem{cor}[thm]{Corollary}
\newtheorem{lem}[thm]{Lemma}
\newtheorem{prop}[thm]{Proposition}
\theoremstyle{definition}
\newtheorem{defn}[thm]{Definition}
\theoremstyle{remark}
\newtheorem{rem}[thm]{Remark}
\numberwithin{equation}{section}
\newcommand{\norm}[1]{\left\Vert#1\right\Vert}
\newcommand{\Real}{\mathbb R}
\newcommand{\To}{\longrightarrow}
\newcommand{\dist}[0]{\mathrm{dist}}
\title{Existence of Good Sweepouts on Closed Manifolds}
\author{Longzhi Lin}%
\address{Department of Mathematics\\
Johns Hopkins University\\
3400 N. Charles St.\\
Baltimore, MD 21218}
\author{Lu Wang}%
\address{Department of Mathematics\\
Massachusetts Institute of Technology\\
77 Massachusetts Avenue, Cambridge, MA 02139}
\email{lzlin@math.jhu.edu and luwang@math.mit.edu}
\begin{document}
\maketitle
\begin{abstract}
 In this note we establish estimates for the harmonic map heat flow from $S^1$ into a closed manifold, and use it to construct sweepouts with the following good property: each curve in the tightened sweepout, whose energy is close to the maximal energy of curves in the sweepout, is itself close to a closed geodesic.
\end{abstract}

\section{Introduction}
Given a minimizing sequence of sweepouts of the width (see (\ref{Fourtheqn})), we apply the harmonic map heat flow on each curve in the sweepout to pull it tight while preserving the sweepout. Moreover the tightened sweepout has the following good property (see Theorem \ref{SecondMainThm}): each curve in the tightened sweepout whose energy is close to the maximal energy of curves in the sweepout is itself close to a closed geodesic. In particular, the width is the energy of some closed geodesic.
On closed non-simply-connected surfaces, Grayson showed that there exist simple closed geodesics in each nontrivial $\pi_1$ homotopy class by the curve shortening flow; see \cite{G}. On the $2$-sphere whose $\pi_1$ homotopy group is trivial, Birkhoff used sweepouts to find non-trivial closed geodesics; see \cite{B1}, \cite{B2}, \cite{CM1}, \cite{CM3}, \cite{CD}, \cite{Lin} and section $2$ in \cite{Cr} about Birkhoff's idea. The argument works equally well on other Riemannian manifolds. The Birkhoff's curve shortening process is a discrete gradient flow of the length functional and the key point is a convexity formula for the energy functional which controls the distance of curves from closed geodesics explicitly; see Lemma $4.2$ in \cite{CM1} and cf. Theorem $3.1$ in \cite{CM2}. However, it requires some work to show the discrete shortening process preserves the homotopy class of sweepouts. Instead, we use a continuous method, i.e. the harmonic map heat flow, to tighten sweepouts, which provides a natural homotopy of sweepouts. There are several applications of the existence of good sweepouts on closed manifolds. For instance, in \cite{CM1}, Colding and Minicozzi bounded from above, by a negative constant, the rate of change of the width for a one-parameter family of convex hypersurfaces that flows by mean curvature. The estimate is sharp and leads to a sharp estimate for the extinction time. And a similar bound for the rate of change for the two dimensional width is shown for homotopy $3$-spheres evolving by the Ricci flow; see \cite{CM2} and \cite{P}. \\

$\mathit{Acknowledgements.}$
We would like to thank our advisors Prof. Tobias Colding and Prof. William Minicozzi for their continuous guidance.

\section{Harmonic Map Heat Flow}
Throughout we use the subscripts $\theta$ and $t$ to denote taking partial derivatives of maps with respect to $\theta$ and $t$; $u$ satisfies the harmonic map heat flow equation, which is defined in (\ref{Firsteqn}).  Let $(M,g)$ be a closed Riemannian manifold. By the Nash Embedding Theorem, $M$ can be isometrically embedded into Euclidean space $(\Real^N,\langle,\rangle)$. Given a closed curve $\gamma\in H^1(S^1,M)$, define the energy functional $E(\gamma)=\frac{1}{2}\int_{S^1}|\gamma_\theta|^2d\theta$.
The harmonic map heat flow is the negative $L^2$ gradient flow of the energy functional. Thus the equation of the harmonic map heat flow from $S^1$ into $M$ is 
\begin{equation}
\label{Firsteqn}
u_t=u_{\theta\theta}-A_u(u_\theta,u_\theta)\ \ \text{on}\  (0,\infty)\times S^1;\ \ 
\lim_{t\To 0+}u(t,\cdot)=u_0\ \ \text{in}\  H^1(S^1,M),
\end{equation}
where $A_u$ is the second fundamental form of $M$ in $\Real^N$ at point $u(\theta)$. 
We study the long time existence and uniqueness of the solution of (\ref{Firsteqn}). For relevant results of the two dimensional harmonic map heat flow, the reader could refer to \cite{S1}. Also we would like to thank Tobias Lamm for bringing to our attention the paper \cite{OT} of Ottarsson, which has some overlap with our paper and in which Theorem \ref{FirstMainThm} was proved under the stronger assumption of $C^1$ initial data (and thus the $C^1$ continuity at $t=0$)\footnote[1]{In our setting, the $C^1$ continuity at $t=0$ may not be true. For our purpose that the harmonic map heat flow preserves the homotopy class of sweepouts, we use a different argument to show the $H^1$ continuity at $t=0$.}.

\begin{thm}
\label{FirstMainThm} Given $u_0\in H^1(S^1,M)$, there exists a unique solution $u(t,\theta)\in C^\infty((0,\infty)\times S^1,M)$ of (\ref{Firsteqn}). 
\end{thm}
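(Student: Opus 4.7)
The plan is to first prove global existence and smoothness for smooth initial data, then approximate general $H^1$ data by smooth maps and pass to the limit; the delicate point will be $H^1$ continuity at $t=0$.

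For $u_0\in C^\infty(S^1,M)$ I would set up a standard contraction mapping via Duhamel's formula with the heat semigroup on $S^1$. Since $M$ is compact and the second fundamental form $A$ is smooth, the nonlinearity $A_u(u_\theta,u_\theta)$ is locally Lipschitz in a space such as $C^0([0,T];C^1(S^1,\Real^N))$, yielding a short-time smooth solution. The energy identity
\[
E(u(t))+\int_0^t\int_{S^1}|u_s|^2\,d\theta\,ds=E(u_0),
\]
obtained by multiplying (\ref{Firsteqn}) by $u_t$ and using that $A_u(u_\theta,u_\theta)\perp T_uM$, controls $\|u_\theta(t)\|_{L^2}$; the one-dimensional Sobolev embedding $H^1(S^1)\hookrightarrow C^0(S^1)$ then keeps the solution in a compact subset of $M$, and semilinear parabolic bootstrap in this subcritical setting propagates smoothness to all of $[0,\infty)\times S^1$.

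For $u_0\in H^1(S^1,M)$ I would approximate by smooth $u_0^k\to u_0$ in $H^1$ (via mollification in $\Real^N$ and nearest-point projection onto $M$) and consider the resulting smooth global solutions $u^k$. The heart of the argument is a difference estimate: setting $w=u^j-u^k$ and expanding
\[
A_{u^j}(u^j_\theta,u^j_\theta)-A_{u^k}(u^k_\theta,u^k_\theta)=(A_{u^j}-A_{u^k})(u^j_\theta,u^j_\theta)+A_{u^k}(u^j_\theta+u^k_\theta,w_\theta),
\]
multiplying the difference equation by $w$ and integrating, then using the Lipschitz bound on $A$ together with the interpolation $\|w\|_{L^\infty}^2\le C(\|w\|_{L^2}^2+\|w\|_{L^2}\|w_\theta\|_{L^2})$ and Young's inequality to absorb the top-order terms, yields
\[
\tfrac{d}{dt}\|w\|_{L^2}^2+\|w_\theta\|_{L^2}^2\le C\|w\|_{L^2}^2,
\]
with $C$ depending only on the uniform energy bound on the $u_0^k$. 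Gronwall makes $(u^k)$ Cauchy in $C^0_tL^2_\theta\cap L^2_tH^1_\theta$, the uniform $L^\infty_tH^1_\theta$ bound lets us extract a weak solution $u$, and standard parabolic regularity upgrades $u$ to $C^\infty$ on $(0,\infty)\times S^1$. Uniqueness follows from the same difference estimate applied to two candidate solutions with the same data.

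The main obstacle, as the footnote warns, is $H^1$ continuity at $t=0$, since the above Cauchy convergence only gives $L^2$ continuity a priori. I would handle this by passing the energy identity for $u^k$ to the limit: the $L^2_tH^1_\theta$ Cauchy bound forces $E(u^k(t))\to E(u(t))$ for a.e.\ $t$, and the weak $L^2_{t,\theta}$ limit of $u^k_s$ captures the dissipation integral, so $u$ inherits the identity. In particular $E(u(t))\to E(u_0)$ as $t\to 0^+$, and combined with the weak $H^1$ convergence $u(t)\rightharpoonup u_0$ (from $L^2$ continuity plus the uniform $H^1$ bound) this promotes to strong $H^1$ convergence via the Hilbert-space fact that weak convergence together with convergence of norms implies strong convergence.
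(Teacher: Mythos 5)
Your overall architecture (smooth data $\Rightarrow$ global existence $\Rightarrow$ approximate $H^1$ data $\Rightarrow$ pass to the limit $\Rightarrow$ uniqueness and $H^1$ continuity at $t=0$) matches the paper, and several of your steps are actually cleaner. But there is a real gap in the global existence step for smooth data, and the same gap reappears when you later invoke ``standard parabolic regularity upgrades $u$ to $C^\infty$ on $(0,\infty)\times S^1$.'' You argue that the energy identity controls $\|u_\theta(t)\|_{L^2}$ and that the embedding $H^1(S^1)\hookrightarrow C^0(S^1)$ ``keeps the solution in a compact subset of $M$.'' This is a non-argument: $u$ takes values in the compact manifold $M$, so a $C^0$ bound on $u$ is automatic and carries no information. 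The obstruction to continuing the smooth flow is a pointwise bound on $u_\theta$, and an $L^2$ bound on $u_\theta$ does not preclude sup-norm blow-up. The paper closes exactly this gap with the Bochner-type differential inequality $(\partial_t-\partial_\theta^2)|u_\theta|^2\le 0$ (Lemma~\ref{FirstLem}), which, via a parabolic maximum/Moser estimate, gives $|u_\theta|^2(t,\theta)\le\tau^{-1}E(u_0)$ for $t\ge\tau$. This pointwise gradient bound is what allows the bootstrap to extend $u$ past any finite time, and it is also what makes the Arzel\`a--Ascoli extraction of the limit solution work in the $H^1$ step, with bounds depending only on $E(u_0)$ and $\tau$. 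The 1D flow is indeed subcritical and a blow-up/small-energy-regularity argument could substitute, but you neither state that lemma nor derive the gradient bound; as written the step would fail.

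Where your proposal genuinely departs from the paper and wins: your difference estimate places the $L^\infty$ interpolation on $w=u^j-u^k$ (via $\|w\|_{L^\infty}^2\lesssim\|w\|_{L^2}^2+\|w\|_{L^2}\|w_\theta\|_{L^2}$) rather than requiring a $C^0$ bound on $u_\theta,\tilde u_\theta$, which is why you need only the $L^2$ gradient bound $\|u_\theta\|_{L^2}^2\le 2E(u_0)$; this avoids the paper's Lemma~\ref{SixthLem} (the space-time $L^2$ bound on $u_{\theta\theta}$) and its Sobolev step entirely, and gives a clean Gronwall inequality valid down to $t=0$. Your route to $H^1$ continuity at $t=0$ (strong $L^2$ continuity plus the uniform $H^1$ bound give weak $H^1$ convergence $u(t)\rightharpoonup u_0$; weak lower semicontinuity gives $\liminf E(u(t))\ge E(u_0)$ while monotonicity gives $\limsup\le E(u_0)$; hence norm convergence and therefore strong $H^1$ convergence) is also genuinely different from, and simpler than, the paper's Lemma~\ref{FourthLem}, which derives a quantitative modulus of continuity through the evolution equation for $|u_t|^2$ and (\ref{Seventheqn}). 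One small correction: passing to the limit in the energy identity with the weak limit of $u^k_t$ only gives an inequality, not the identity, but you do not need the identity--monotonicity plus weak lower semicontinuity already give $E(u(t))\to E(u_0)$. Note also that the paper's Lemma~\ref{FourthLem} gives a \emph{uniform} modulus of continuity (with $\delta$ depending only on $M$, $u_0$, $\epsilon$, not on $t_1$), which is then used to show the tightened sweepouts remain continuous in $s$; your sequential argument suffices for Theorem~\ref{FirstMainThm} as stated but does not by itself yield that uniform modulus.
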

The following is devoted to the proof of Theorem \ref{FirstMainThm}. First, by the corollary on page 124 of \cite{Ha}, given any initial data $u_0\in C^\infty(S^1,M)$, there exists $T_0>0$ and a unique solution $u\in C^\infty([0,T_0)\times S^1,M)$ of (\ref{Firsteqn}). We show that the solution $u$ can be extended smoothly beyond $T_0$. First, note that the energy is non-increasing under the harmonic map heat flow:
\begin{lem}
\label{SecondLem}For $0\leq t_1\leq t_2<T_0$,
\begin{equation}
\label{Secondeqn}
E(u(t_1,\cdot))-E(u(t_2,\cdot))
=\int_{t_1}^{t_2}\int_{S^1}|u_t|^2d\theta dt.
\end{equation}
\end{lem}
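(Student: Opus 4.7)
The plan is to prove this energy identity by differentiating $E(u(t,\cdot))$ in $t$, integrating by parts on $S^1$, and exploiting the fact that $u_t$ is tangent to $M$ while the second fundamental form term is normal. Since the hypothesis supplies smoothness of $u$ on $[0,T_0)\times S^1$ (via the cited result from Hamilton), differentiation under the integral sign and integration by parts will be fully justified.

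First I would compute, for any $t\in[0,T_0)$,
\[
\frac{d}{dt}E(u(t,\cdot))=\int_{S^1}\langle u_\theta, u_{t\theta}\rangle\,d\theta.
\]
Since $S^1$ has no boundary, integration by parts gives
\[
\frac{d}{dt}E(u(t,\cdot))=-\int_{S^1}\langle u_{\theta\theta},u_t\rangle\,d\theta.
\]
Next, using that $u(t,\cdot)$ maps into $M$, the time derivative $u_t$ lies in $T_{u}M$, whereas $A_u(u_\theta,u_\theta)$ takes values in the normal bundle $(T_uM)^\perp$; hence $\langle A_u(u_\theta,u_\theta),u_t\rangle=0$ pointwise. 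Substituting the evolution equation (\ref{Firsteqn}) then yields
\[
\langle u_{\theta\theta},u_t\rangle=\langle u_{\theta\theta}-A_u(u_\theta,u_\theta),u_t\rangle=|u_t|^2.
\]

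Combining the last two displays gives the pointwise (in $t$) identity
\[
\frac{d}{dt}E(u(t,\cdot))=-\int_{S^1}|u_t|^2\,d\theta,
\]
and integrating from $t_1$ to $t_2$ produces (\ref{Secondeqn}). There is no serious obstacle here: the content is purely the $L^2$-gradient-flow character of (\ref{Firsteqn}), and the only mild point to check is the tangency/normality splitting, which is immediate from the isometric embedding $M\hookrightarrow\mathbb{R}^N$. The statement is given on the half-open interval $0\le t_1\le t_2<T_0$, so no extension to the endpoint $t=T_0$ is required and the smoothness of $u$ suffices throughout.
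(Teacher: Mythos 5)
Your proof is correct and follows essentially the same approach as the paper: both exploit the $L^2$-gradient-flow structure by pairing the evolution equation with $u_t$, integrating by parts over $S^1$, and using that $A_u(u_\theta,u_\theta)\perp u_t$. The only cosmetic difference is that the paper starts by multiplying the equation by $u_t$ and integrating over $[t_1,t_2]\times S^1$ directly, whereas you first differentiate $E(u(t,\cdot))$ pointwise in $t$ and then integrate; the chain of equalities is the same.
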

\begin{proof}
Multiply the harmonic map heat equation by $u_t$ and integrate over $[t_1,t_2]\times S^1$,
\begin{eqnarray*}
\int_{t_1}^{t_2}\int_{S^1}|u_t|^2d\theta dt&=&\int_{t_1}^{t_2}\int_{S^1}\langle u_{\theta\theta},u_t\rangle d\theta dt=-\int_{t_1}^{t_2}\int_{S^1}\langle u_\theta,u_{\theta t}\rangle d\theta dt\\
&=&E(u(t_1,\cdot))-E(u(t_2,\cdot)).
\end{eqnarray*}
\end{proof}
Next we derive the gradient bound of $u$.
\begin{lem}
\label{FirstLem} $(\partial_t-\partial_\theta^2)|u_\theta|^2\leq 0$.
\end{lem}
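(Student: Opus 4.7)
The plan is to compute $(\partial_t-\partial_\theta^2)|u_\theta|^2$ directly from the flow equation (\ref{Firsteqn}) and then exploit two elementary facts about the second fundamental form $A_u$: that $A_u(u_\theta,u_\theta)$ is normal to $M$ at $u$, and that this same normal vector equals the normal component of $u_{\theta\theta}$ (which is how the second fundamental form acts on velocity fields of curves lying in $M$).

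Expanding the left-hand side gives
\[
(\partial_t-\partial_\theta^2)|u_\theta|^2 = 2\langle u_{\theta t},u_\theta\rangle - 2\langle u_{\theta\theta\theta},u_\theta\rangle - 2|u_{\theta\theta}|^2.
\]
Differentiating (\ref{Firsteqn}) in $\theta$ yields $u_{t\theta}-u_{\theta\theta\theta} = -\partial_\theta[A_u(u_\theta,u_\theta)]$, so the first two terms collapse to $-2\langle \partial_\theta[A_u(u_\theta,u_\theta)],u_\theta\rangle$. Since $A_u(u_\theta,u_\theta)\perp T_uM$ while $u_\theta\in T_uM$, the identity $\langle A_u(u_\theta,u_\theta),u_\theta\rangle\equiv 0$ holds pointwise; differentiating this relation in $\theta$ transfers the derivative to $u_\theta$ and gives
\[
-\langle \partial_\theta[A_u(u_\theta,u_\theta)],u_\theta\rangle = \langle A_u(u_\theta,u_\theta),u_{\theta\theta}\rangle.
\]

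It then remains to orthogonally decompose $u_{\theta\theta}=(u_{\theta\theta})^T+(u_{\theta\theta})^\perp$. Because $u$ maps into $M$, the normal component is exactly $A_u(u_\theta,u_\theta)$ by definition of the second fundamental form. Substituting,
\[
(\partial_t-\partial_\theta^2)|u_\theta|^2 = 2|A_u(u_\theta,u_\theta)|^2 - 2|(u_{\theta\theta})^T|^2 - 2|A_u(u_\theta,u_\theta)|^2 = -2|(u_{\theta\theta})^T|^2 \leq 0.
\]
I do not anticipate a substantive obstacle: the computation is a few lines and the only care required is to invoke the normality of $A_u$ twice — once to eliminate the first-order inner product with $u_\theta$, and once to identify the normal projection of $u_{\theta\theta}$ with $A_u(u_\theta,u_\theta)$ in the final substitution.
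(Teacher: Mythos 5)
Your proof is correct and follows essentially the same route as the paper's: differentiate the flow equation in $\theta$, use normality of $A_u(u_\theta,u_\theta)$ to transfer the $\theta$-derivative and obtain $\langle u_{\theta\theta},A_u(u_\theta,u_\theta)\rangle$, then identify $A_u(u_\theta,u_\theta)$ as the normal part of $u_{\theta\theta}$. Your only (cosmetic) difference is that you write the final quantity as $-2|(u_{\theta\theta})^T|^2$, making the sign manifest, whereas the paper leaves it as $-2|u_{\theta\theta}|^2+2|A_u(u_\theta,u_\theta)|^2\leq 0$.
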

\begin{proof}
\begin{eqnarray*}
\partial_t|u_\theta|^2&=&2\langle u_\theta,u_{\theta t}\rangle=2\langle u_\theta,u_{\theta\theta\theta}\rangle
-2\langle u_\theta,(A_u(u_\theta,u_\theta))_\theta\rangle\\
&=&2\langle u_\theta,u_{\theta\theta\theta}\rangle
+2\langle u_{\theta\theta},A_u(u_\theta,u_\theta)\rangle\\
&=&\partial_\theta^2|u_\theta|^2-2|u_{\theta\theta}|^2
+2|A_u(u_\theta,u_\theta)|^2\\
&\leq&\partial_\theta^2|u_\theta|^2.
\end{eqnarray*}
\end{proof}
Thus, by Theorem 7.36 in \cite{Lie}, for any $\tau>0$ and $(t,\theta)\in [\tau,T_0)\times S^1$,
\begin{equation}
\label{Fiftheqn}
|u_\theta|^2(t,\theta)\leq \tau^{-1}E(u_0).
\end{equation}
By Proposition 7.18 in \cite{Lie}, $|u_{\theta\theta}|$ and $|u_t|$ are bounded on $[2\tau,T_0)\times S^1$. And by induction, for any $\tau>0$, the higher order derivatives of $u$ on $[2\tau,T_0)\times S^1$ are bounded uniformly by constants depending only on $M$, $E(u_0)$, $\tau$ and $T_0$. Hence $u$ can be extended smoothly to a solution of (\ref{Firsteqn}) beyond $T_0$. In other word, there exists a unique solution $u\in C^\infty([0,\infty)\times S^1,M)$ of (\ref{Firsteqn}), if $u_0\in C^\infty(S^1,M)$.

Next, in general, given $u_0\in H^1(S^1,M)$, we can find a sequence $u^m_0\in C^\infty(S^1,M)$ approaching $u_0$ in the $H^1$ topology. Let $u^m$ be the solution of the harmonic map heat flow with initial data $u^m_0$. Thus by (\ref{Fiftheqn}) and discussion above, for any $\tau>0$ and $T_0>\tau$, $u^m$ and all their derivatives are bounded uniformly, independent of $m$. Hence by the Arzela-Ascoli Theorem and a diagonalization argument, there exists a map $u\in C^\infty((0,\infty)\times S^1,M)$ solving the harmonic map heat flow with $E(u(t,\cdot))\leq E(u_0)$. And it follows from the lemma below that $t\To u(t,\cdot)$ is a continuous map from $[0,\infty)\To H^1(S^1,M)$. 
\begin{lem}
\label{FourthLem}
Given $\epsilon>0$, there exists $\delta>0$, depending on $M$, $u_0$ and $\epsilon$, so that if  $0\leq t_1< t_2$ and $t_2-t_1<\delta$, then 
\begin{equation}
\label{Sixtheqn}
\norm{u(t_2,\cdot)-u(t_1,\cdot)}_{H^1(S^1)}\leq \epsilon.
\end{equation}
\end{lem}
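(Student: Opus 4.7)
The plan is to combine strong $H^1$ continuity at $t_1 = 0$ with a Lipschitz-in-time estimate on $[t_0, \infty) \times S^1$ for each fixed $t_0 > 0$, and then glue the two regimes.

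For the continuity at $t = 0$, the idea is to first bound the $L^2$ difference via Cauchy--Schwarz and Lemma \ref{SecondLem} applied to the smooth approximants $u^m$ (with $u^m_0 \to u_0$ in $H^1$):
\[
\norm{u^m(t,\cdot) - u^m_0}_{L^2(S^1)}^2 \leq t\bigl(E(u^m_0) - E(u^m(t,\cdot))\bigr) \leq t E(u^m_0).
\]
Passing to the $C^\infty_{\mathrm{loc}}$ limit yields $\norm{u(t,\cdot) - u_0}_{L^2}^2 \leq t E(u_0)$, so $u(t,\cdot) \to u_0$ in $L^2$ as $t \to 0^+$. The same limiting argument gives $E(u(t,\cdot)) \leq E(u_0)$, so $\set{u(t,\cdot)}_{t > 0}$ is uniformly bounded in $H^1$; together with the $L^2$ convergence this upgrades to weak $H^1$ convergence $u(t,\cdot) \rightharpoonup u_0$ as $t \to 0^+$. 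Weak lower semicontinuity then forces $\liminf_{t \to 0^+} E(u(t,\cdot)) \geq E(u_0)$, and combined with the monotonicity $E(u(t,\cdot)) \leq E(u_0)$ yields $E(u(t,\cdot)) \to E(u_0)$, hence $\norm{u(t,\cdot)}_{H^1} \to \norm{u_0}_{H^1}$. In the Hilbert space $H^1$, weak convergence plus norm convergence forces strong convergence, so $u(t,\cdot) \to u_0$ in $H^1$.

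For the Lipschitz estimate, fix $t_0 > 0$. The gradient bound (\ref{Fiftheqn}) together with the iterated parabolic estimates cited after it produces a constant $C = C(M, E(u_0), t_0)$ with $\norm{u_t(t,\cdot)}_{L^\infty(S^1)} + \norm{u_{\theta t}(t,\cdot)}_{L^\infty(S^1)} \leq C$ for all $t \geq t_0$; the uniformity in $t$ uses that on the compact manifold $S^1$ the heat semigroup is uniformly bounded $L^1 \to L^\infty$ away from $0$, so Lemma \ref{FirstLem} yields a time-independent $L^\infty$ bound on $\abs{u_\theta}^2$, which bootstraps to all higher derivatives via translation invariance in $t$. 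Integrating in $t$ then gives $\norm{u(t_2,\cdot) - u(t_1,\cdot)}_{H^1(S^1)} \leq C'(t_2 - t_1)$ for $t_1, t_2 \geq t_0$. To glue, given $\epsilon > 0$ choose $\delta_1 > 0$ from the first step so that $\norm{u(t,\cdot) - u_0}_{H^1} \leq \epsilon/2$ on $[0, \delta_1]$, apply the Lipschitz estimate with $t_0 := \delta_1/2$ to produce $C'$, and set $\delta := \min(\delta_1/2, \epsilon/C')$; the case $t_1 \geq t_0$ is controlled by the Lipschitz estimate, while the case $t_1 < t_0$ forces $t_2 < \delta_1$ so that the triangle inequality through $u_0$ closes the bound.

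The main obstacle is the strong $H^1$ continuity at $t_1 = 0$: the initial data lies only in $H^1$, and the $C^\infty_{\mathrm{loc}}$ convergence $u^m \to u$ on $(0, \infty) \times S^1$ gives no direct control on the $H^1$ distance from $u(t, \cdot)$ to $u_0$ as $t \to 0^+$. The crucial step is to pass through weak $H^1$ convergence and then recover strong convergence by combining energy monotonicity with the lower semicontinuity of $E$ under weak convergence.
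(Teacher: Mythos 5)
Your proof is correct, but it takes a genuinely different route from the paper's. The paper's argument is a single direct computation valid uniformly for all $0\leq t_1<t_2$: it writes $\int_{S^1}|u_\theta(t_2)-u_\theta(t_1)|^2d\theta$ via integration by parts as a sum of $2\int_{t_1}^{t_2}\int_{S^1}|u_t|^2$ plus a term $2\langle u_{\theta\theta}(t_2,\cdot),\,u(t_1,\cdot)-u(t_2,\cdot)\rangle_{L^2}$, estimates $\norm{u_{\theta\theta}(t_2,\cdot)}_{L^2}$ via the flow equation and the gradient bound (\ref{Fiftheqn}), and then controls $\norm{u_t(t_2,\cdot)}_{L^2}$ by a parabolic mean-value inequality derived from the evolution equation (\ref{Twelftheqn}) for $|u_t|^2$. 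Everything is ultimately reduced to $\int_{t_1}^{t_2}\int_{S^1}|u_t|^2=E(u(t_1,\cdot))-E(u(t_2,\cdot))$, which is small uniformly when $t_2-t_1$ is small because $t\mapsto E(u(t,\cdot))$ is a continuous, nonincreasing, bounded function on $[0,\infty)$. Your proof instead splits into regimes: near $t=0$ you recover strong $H^1$ continuity through the weak-compactness route ($L^2$ convergence + $H^1$ boundedness $\Rightarrow$ weak $H^1$ convergence; weak lower semicontinuity of $E$ + energy monotonicity $\Rightarrow$ norm convergence $\Rightarrow$ strong convergence), and for $t\geq t_0>0$ you invoke a time-uniform Lipschitz bound from interior parabolic estimates (here the translation invariance of the autonomous equation, combined with the $t$-independent gradient bound (\ref{Fiftheqn}), is indeed what makes the constant independent of $t$ rather than of $T_0$). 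The two approaches trade off differently: the paper's yields an explicit, quantitative modulus of $H^1$ continuity in one computation and avoids any compactness argument, while yours is a cleaner conceptual split; the compactness step near $t=0$ sacrifices effectivity but makes the strong $H^1$ continuity at $t=0$ (the crux, as you note) transparent. Both are valid proofs of the lemma as stated.
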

\begin{proof}
Note that by Lemma \ref{SecondLem},  $\lim_{t\rightarrow 0}u(t,\cdot)= u_0$ in the $L^2(S^1,M)$ topology. Moreover, we have
\begin{equation*}
\int_{S^1}|u(t_2,\theta)-u(t_1,\theta)|^2d\theta
\leq\int_{S^1}\left|\int_{t_1}^{t_2}u_tdt\right|^2d\theta
\leq(t_2-t_1)\int_{t_1}^{t_2}\int_{S^1}|u_t|^2d\theta dt.
\end{equation*}
Next, by Lemma \ref{SecondLem} and the Cauchy-Schwarz inequality,
\begin{eqnarray*}
&&\int_{S^1}|u_\theta(t_2,\theta)-u_\theta(t_1,\theta)|^2
d\theta\\
&=&\int_{S^1}|u_\theta(t_1,\theta)|^2d\theta
-\int_{S^1}|u_\theta(t_2,\theta)|^2d\theta
-2\int_{S^1}\langle u_\theta(t_2,\theta),
u_\theta(t_1,\theta)-u_\theta(t_2,\theta)\rangle d\theta\\
&=&2\int_{t_1}^{t_2}\int_{S^1}|u_t|^2d\theta dt
+2\int_{S^1}\langle u_{\theta\theta}(t_2,\theta),
u(t_1,\theta)-u(t_2,\theta)\rangle d\theta\\
&\leq&2\int_{t_1}^{t_2}\int_{S^1}|u_t|^2d\theta dt
+2\left(\int_{S^1}|u_{\theta\theta}(t_2,\theta)|^2d\theta\right)
^{\frac{1}{2}}\left(\int_{S^1}
|u(t_2,\theta)-u(t_1,\theta)|^2d\theta\right)^{\frac{1}{2}}\\
&\leq&2\int_{t_1}^{t_2}\int_{S^1}|u_t|^2d\theta dt
+2(t_2-t_1)^{\frac{1}{2}}\left(\int_{S^1}|u_{\theta\theta}(t_2,\theta)|^2d\theta\right)
^{\frac{1}{2}}\left( \int_{t_1}^{t_2}\int_{S^1}|u_t|^2d\theta dt\right)^{\frac{1}{2}}.
\end{eqnarray*}
From Lemma \ref{SecondLem}, (\ref{Firsteqn}) and (\ref{Fiftheqn}),
\begin{equation}
\label{Seventheqn}
\int_{S^1}|u_{\theta\theta}(t_2,\theta)|^2d\theta\leq
\int_{S^1}|u_t(t_2,\theta)|^2d\theta+t_2^{-1}\sup_M|A|^2\cdot
E(u_0)^2.
\end{equation}
We derive the evolution equation of $|u_t|^2$:
\begin{eqnarray*}
\partial_t|u_t|^2&=&2\langle u_t,u_{tt}\rangle
=2\langle u_t,u_{\theta\theta t}\rangle-2\langle u_t,(A_u(u_\theta,u_\theta))_t\rangle\\
&=&\partial_\theta^2|u_t|^2-2|u_{t\theta}|^2
+2\langle u_{tt},A_u(u_\theta,u_\theta)\rangle\\
&=&\partial_\theta^2|u_t|^2-2|u_{t\theta}|^2
+2\langle A_u(u_t,u_t),A_u(u_\theta,u_\theta)\rangle.
\end{eqnarray*}
Thus by (\ref{Fiftheqn}), if $t>t_3=t_1+(t_2-t_1)/2$,
\begin{equation}
\label{Twelftheqn}
(\partial_t-\partial_\theta^2)|u_t|^2-4(t_2-t_1)^
{-1}\sup_M|A|^2\cdot
E(u_0)\cdot|u_t|^2\leq 0.
\end{equation}
Hence
\begin{eqnarray*}
\label{Eightheqn}
\int_{S^1}|u_t|^2(t_2,\theta)d\theta&\leq&\inf_{t_3\leq t\leq t_2}\int_{S^1}|u_t|^2(t,\theta)d\theta+C(t_2-t_1)^{-1}\int_{t_1}^{t_2}\int_{S^1}
|u_t|^2d\theta dt\\
&\leq&(C+2)(t_2-t_1)^{-1}\int_{t_1}^{t_2}\int_{S^1}
|u_t|^2d\theta dt,
\end{eqnarray*}
where $C$ depends on $M$ and $E(u_0)$. Combining the inequality above, (\ref{Seventheqn}) and Lemma \ref{SecondLem}, there exists $\delta>0$ so that
 (\ref{Sixtheqn}) holds.
\end{proof}
It follows from Lemma \ref{FourthLem} and (\ref{Fiftheqn}) that there exists $R_0>0$, depending only on $M$ and $u_0$, so that for $t\geq 0$, $\sqrt{2\pi}\cdot\sup_M|A|^2\cdot\int_{\{t\}\times I_{R_0}}|u_\theta|^2d\theta<1/16$ , where $I_{R_0}$ is any segment on unit circle of length $R_0$. To prove the uniqueness of the solution of (\ref{Firsteqn}), we need the following lemma.
\begin{lem}
\label{SixthLem}
Suppose that $u$ is a solution of (\ref{Firsteqn}) in $C^\infty((0,\infty)\times S^1,M)$. Then
\begin{equation}
\label{Thirteentheqn}
\int^T_0\int_{S^1}|u_{\theta\theta}|^2d\theta dt\leq \frac{T}{4R_0^2}E(u_0)+2\left[E(u_0)-E(u(T,\cdot))\right].
\end{equation}
\end{lem}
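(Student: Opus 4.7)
The plan is to combine the orthogonal splitting that comes directly from the PDE with a one-dimensional Sobolev-type inequality on arcs of length $R_0$, and then use the smallness condition built into the choice of $R_0$ to absorb the resulting curvature contribution back into $\int|u_{\theta\theta}|^2$.

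First I would observe that since $u_t$ is tangent to $M$ while $A_u(u_\theta,u_\theta)$ is normal, the equation $u_{\theta\theta}=u_t+A_u(u_\theta,u_\theta)$ gives the pointwise identity $|u_{\theta\theta}|^2=|u_t|^2+|A_u(u_\theta,u_\theta)|^2$. Integrating and applying Lemma~\ref{SecondLem} reduces the problem to establishing
\begin{equation*}
\int_0^T\!\int_{S^1}|A_u(u_\theta,u_\theta)|^2\,d\theta\,dt \;\leq\; \frac{T}{4R_0^2}E(u_0)+\bigl[E(u_0)-E(u(T,\cdot))\bigr].
\end{equation*}
Using $|A_u(u_\theta,u_\theta)|^2\leq\sup_M|A|^2\cdot|u_\theta|^4$, the task becomes a bound on $\sup_M|A|^2\int_0^T\!\int_{S^1}|u_\theta|^4$.

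Next I would partition $S^1$ into intervals $J$ of length $R_0$ and, on each $J$, apply the elementary estimate
\begin{equation*}
\max_J|u_\theta|^2 \;\leq\; R_0^{-1}\!\int_J|u_\theta|^2\,d\theta \;+\; 2\Bigl(\!\int_J|u_\theta|^2\,d\theta\Bigr)^{1/2}\Bigl(\!\int_J|u_{\theta\theta}|^2\,d\theta\Bigr)^{1/2},
\end{equation*}
which follows by averaging the identity $|u_\theta(\theta_1)|^2=|u_\theta(\theta_0)|^2+2\int_{\theta_0}^{\theta_1}\langle u_\theta,u_{\theta\theta}\rangle$ over $\theta_0\in J$ and Cauchy-Schwarz. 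Then $\int_J|u_\theta|^4\leq(\max_J|u_\theta|^2)\int_J|u_\theta|^2$, combined with the defining smallness $\sqrt{2\pi}\sup_M|A|^2\int_J|u_\theta|^2<1/16$ of $R_0$, produces a small factor everywhere one is needed. Summing over the intervals, using Cauchy-Schwarz on the cross term $\sum_J\sqrt{a_Jb_J}\leq\|u_\theta\|_{L^2(S^1)}\|u_{\theta\theta}\|_{L^2(S^1)}$, and invoking $\|u_\theta\|_{L^2(S^1)}^2\leq 2E(u_0)$ from Lemma~\ref{SecondLem}, I would obtain a pointwise-in-$t$ inequality of the shape
\begin{equation*}
\sup_M|A|^2\!\int_{S^1}|u_\theta|^4\,d\theta \;\leq\; C_1\,R_0^{-1}E(u_0)\;+\;C_2\sqrt{E(u_0)}\,\Bigl(\!\int_{S^1}|u_{\theta\theta}|^2\,d\theta\Bigr)^{1/2},
\end{equation*}
with small absolute constants $C_1,C_2$ issuing from the $1/16$ threshold.

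Finally I would integrate in $t$, use Cauchy-Schwarz in $t$ to turn the square root into $\sqrt{T\int_0^T\!\int|u_{\theta\theta}|^2}$, and apply Young's inequality to absorb a $\tfrac12\int_0^T\!\int|u_{\theta\theta}|^2$ into the left side via the identity from step one. The factor of $2$ multiplying $E(u_0)-E(u(T,\cdot))$ in the statement is exactly what drops out of this $\tfrac12$-$\tfrac12$ Young split. The main obstacle is the constant bookkeeping: arranging the absolute constants so that the residual $T$-dependent term fits inside $\frac{T}{4R_0^2}E(u_0)$. This is precisely the role of the specific $\sqrt{2\pi}$ factor and the $1/16$ threshold baked into the definition of $R_0$, which together make the absorption clean and quantitative.
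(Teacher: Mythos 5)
Your proof is correct and takes essentially the same route as the paper: orthogonal splitting of $u_{\theta\theta}$ into $u_t$ (tangent) and $A_u(u_\theta,u_\theta)$ (normal), an interpolation bound for $\int_{S^1}|u_\theta|^4$ in terms of $\int_{S^1}|u_{\theta\theta}|^2$ with a small coefficient supplied by the defining smallness of $R_0$, and absorption into the left-hand side. The only differences are presentational: the paper invokes Lemma 6.7 of \cite{S2} rather than re-deriving the interpolation inequality, and it integrates over $[t_0,T]$ with $t_0>0$ before letting $t_0\to 0$ since $u$ is smooth only for $t>0$.
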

\begin{proof}
If $0<t_0\leq T$, then by (\ref{Firsteqn}) and an interpolation inequality (see Lemma 6.7 in \cite{S2}),
\begin{eqnarray*}
\int^T_{t_0}\int_{S^1}|u_{\theta\theta}|^2d\theta dt&\leq &\int^T_{t_0}\int_{S^1}|u_t|^2d\theta dt+\sup_{M}|A|^2\cdot\int^T_{t_0}\int_{S^1}|u_\theta|^4d\theta dt\\
&\leq& \left[E(u_0)-E(u(T,\cdot))\right]+\frac{1}{2}\int^T_{t_0}\int_{S^1}
|u_{\theta\theta}|^2d\theta dt+\frac{T}{8R_0^2}E(u_0).
\end{eqnarray*}
Absorbing the righthand side into the lefthand side and noting that the estimate is independent of $t_0$, (\ref{Thirteentheqn}) follows immediately.
\end{proof}
Now we are ready to show the uniqueness of the solution of the harmonic map heat flow.
\begin{lem}
\label{FifthLem}
Given $u_0\in H^1(S^1,M)$, let $u$ and $\tilde{u}$ be solutions of (\ref{Firsteqn}) in $C^\infty((0,\infty)\times S^1,M)$.
Then $u=\tilde{u}$.
\end{lem}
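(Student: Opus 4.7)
The plan is to derive a Gronwall-type inequality for the $L^2$ difference $f(t)=\int_{S^1}|u-\tilde u|^2\,d\theta$ and combine it with the $H^1$-continuity at $t=0$ (provided by Lemma \ref{FourthLem}, which applies to any smooth solution with $H^1$ initial data) to conclude $f\equiv 0$. Set $w=u-\tilde u$, so that $w$ satisfies
\begin{equation*}
w_t=w_{\theta\theta}-\bigl[A_u(u_\theta,u_\theta)-A_{\tilde u}(\tilde u_\theta,\tilde u_\theta)\bigr].
\end{equation*}
Multiplying by $w$, integrating over $S^1$, and integrating by parts yields
\begin{equation*}
\tfrac12 f'(t)=-\int_{S^1}|w_\theta|^2\,d\theta-\int_{S^1}\bigl\langle w,\,A_u(u_\theta,u_\theta)-A_{\tilde u}(\tilde u_\theta,\tilde u_\theta)\bigr\rangle\,d\theta.
\end{equation*}

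Since $M$ is closed, the second fundamental form $A$ is smooth and bounded with bounded derivatives, so the mean value theorem gives the pointwise estimate
\begin{equation*}
\bigl|A_u(u_\theta,u_\theta)-A_{\tilde u}(\tilde u_\theta,\tilde u_\theta)\bigr|\leq C\bigl(|w|(|u_\theta|^2+|\tilde u_\theta|^2)+(|u_\theta|+|\tilde u_\theta|)|w_\theta|\bigr),
\end{equation*}
with $C$ depending only on $M$. Applying Cauchy-Schwarz and absorbing $\int|w_\theta|^2$ into the good term yields
\begin{equation*}
f'(t)\leq h(t)\,f(t),\qquad h(t):=C\bigl(\|u_\theta(t,\cdot)\|_{L^\infty}^2+\|\tilde u_\theta(t,\cdot)\|_{L^\infty}^2\bigr).
\end{equation*}

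The key step is to show $h\in L^1_{\mathrm{loc}}([0,\infty))$, which is the main obstacle because the pointwise bound (\ref{Fiftheqn}) degenerates like $\tau^{-1}$ as $\tau\to 0$ and is not time-integrable. To overcome this, I would use the one-dimensional Sobolev embedding
\begin{equation*}
\|u_\theta\|_{L^\infty(S^1)}^2\leq C\bigl(\|u_\theta\|_{L^2(S^1)}^2+\|u_{\theta\theta}\|_{L^2(S^1)}^2\bigr),
\end{equation*}
so that for any $T>0$,
\begin{equation*}
\int_0^T\|u_\theta\|_{L^\infty}^2\,dt\leq 2CT\,E(u_0)+C\int_0^T\|u_{\theta\theta}\|_{L^2}^2\,dt,
\end{equation*}
which is finite by Lemma \ref{SixthLem}; the same bound applies to $\tilde u$, so $h\in L^1(0,T)$.

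Finally, $f$ is smooth on $(0,\infty)$, and Lemma \ref{FourthLem} applied to $u$ and $\tilde u$ gives $u(t,\cdot)\to u_0$ and $\tilde u(t,\cdot)\to u_0$ in $H^1(S^1,M)$ as $t\to 0^+$, so in particular $f(t)\to 0$. Integrating $f'\leq h\,f$ on $[s,t]\subset(0,T]$ gives $f(t)\leq f(s)\exp\!\bigl(\int_s^t h\bigr)$, and letting $s\to 0^+$ yields $f(t)=0$ for every $t\in(0,T]$ and every $T>0$. Hence $u\equiv\tilde u$.
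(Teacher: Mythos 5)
Your proof is correct and takes essentially the same approach as the paper: you derive the same energy estimate for $v = u - \tilde{u}$, bound the nonlinearity via a mean-value/Cauchy--Schwarz estimate and absorb the $|v_\theta|^2$ term, and use Lemma \ref{SixthLem} together with the $H^1(S^1)\hookrightarrow L^\infty(S^1)$ embedding to control the coefficient in time. The only cosmetic difference is in the finishing step---the paper picks $\delta$ small enough to make the coefficient's time integral at most $1/2$, absorbs directly over $[0,\delta]$, and iterates, whereas you phrase it as a differential Gronwall inequality $f'\leq hf$ with $h\in L^1_{\mathrm{loc}}$ and send $s\to 0^+$; these are standard equivalent forms of the same argument.
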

\begin{proof}
Define $v=u-\tilde{u}$, thus 
\begin{equation}
\label{Ninetheqn}
v_t=v_{\theta\theta}
-A_{u}(u_\theta,u_\theta)
+A_{\tilde{u}}(\tilde{u}_\theta,\tilde{u}_\theta).
\end{equation}
Multiply both sides of $(\ref{Ninetheqn})$ by $v$ and integrate over $[0,t_0]\times S^1$,  
\begin{eqnarray*}
&&\int_{\{t_0\}\times S^1}|v|^2d\theta
+2\int_0^{t_0}\int_{S^1}|v_\theta|^2d\theta dt\\
&=&2\int_0^{t_0}\int_{S^1}\langle A_{\tilde{u}}(\tilde{u}_\theta,\tilde{u}_\theta)
-A_{u}(u_\theta,u_\theta),v\rangle d\theta dt\\
&\leq&C(M)\int_0^{t_0}\int_{S^1}|v|^2(|\tilde{u}_\theta|^2
+|u_\theta|^2)d\theta dt+
C(M)\int_0^{t_0}\int_{S^1}|v||v_\theta|(|\tilde{u}_\theta|
+|u_\theta|)d\theta dt\\
&\leq&C(M)\int_0^{t_0}\int_{S^1}|v|^2(|\tilde{u}_\theta|^2
+|u_\theta|^2)d\theta dt+
\int_0^{t_0}\int_{S^1}|v_\theta|^2 d\theta dt\\
&\leq&C(M)\int_0^{t_0}\left(\norm{u_\theta}_{C^0(\{t\}\times S^1)}^2+\norm{\tilde{u}_\theta}_{C^0(\{t\}\times S^1)}^2\right)\int_{S^1}|v|^2d\theta dt+
\int_0^{t_0}\int_{S^1}|v_\theta|^2 d\theta dt.\\
\end{eqnarray*}
By Lemma \ref{SecondLem}, \ref{SixthLem} and Sobolev embedding theorem, there exists $\delta>0$, depending on $M$ and $u_0$, so that if $t_0\leq \delta$, then
\begin{eqnarray*}
&&C(M)\int_0^{t_0}\norm{\tilde{u}_\theta}_{C^0(\{t\}\times S^1)}^2+\norm{u_\theta}_{C^0(\{t\}\times S^1)}^2dt\\
&\leq&C(M)\int_0^{t_0}\int_{S^1}|\tilde{u}_{\theta\theta}|^2
+|u_{\theta\theta}|^2d\theta dt\\
&\leq&C(M)\left[E(u_0)\frac{t_0}{2R_0^2}
+4E(u_0)-2E(u(t_0,\cdot))-2E(\tilde{u}(t_0,\cdot))\right]\leq\frac{1}{2}.
\end{eqnarray*} 
Thus, absorbing the righthand side into the lefthand side, 
\begin{equation}
\sup_{0\leq t\leq \delta}\int_{\{t\}\times S^1}|v|^2d\theta+2\int_0^\delta\int_{S^1}|v_\theta|^2 d\theta dt\leq 0.
\end{equation}
Since $[0,T]$ is compact, Lemma \ref{FifthLem} follows by iteration.
\end{proof}

\section{Width and Good Sweepouts}
In \cite{CM1}, Colding and Minicozzi introduced the crucial geometric concepts: sweepout and width. 
\begin{defn}
\label{SecondDefn} A continuous map $\sigma: [-1,1]\times S^1\To M$ is called a sweepout on $M$,  if $\sigma(s,\cdot)\in H^1(S^1,M)$ for each $s\in [-1,1]$, the map $s\To\sigma(s,\cdot)$ is continuous from $[-1,1]$ to $H^1(S^1,M)$ and $\sigma$ maps $\{-1\}\times S^1$ and $\{1\}\times S^1$ to points. 
\end{defn}
The sweepout $\sigma$ induces a map $\tilde{\sigma}$ from the sphere $S^2$ to $M$ and we will not distinguish $\sigma$ with $\tilde{\sigma}$. Denote by $\Omega$ the set of sweepouts on $M$. The homotopy class $\Omega_{\sigma}$ of $\sigma$ is the path connected component of $\sigma$ in $\Omega$, where the topology is induced from $C^0([-1,1],H^1(S^1,M))$.
\begin{defn}
\label{ThirdDefn} The width $W=W(\Omega_{\sigma})$ of the homotopy class $\Omega_{\sigma}$ is defined by 
\begin{equation}
\label{Fourtheqn}
W=\inf_{\hat{\sigma}\in\Omega_{\sigma}}
\max_{s\in[-1,1]}E(\hat{\sigma}(s,\cdot)).
\end{equation}
\end{defn}
For $\alpha\in (0,1)$ fixed, let $\gamma:S^1\To M$ be 
a smooth closed curve and $G$ be the set of closed geodesics 
in $M$. Define $\dist_\alpha(\gamma,G)=\inf_{\tilde{\gamma}\in G}
\norm{\gamma-\tilde{\gamma}}_{C^{1,\alpha}(S^1)}$.
We prove the following proposition for the solution of 
(\ref{Firsteqn}), which is the key to the proof of Theorem \ref{SecondMainThm}:
\begin{prop}
\label{FirstProp} Given $0<\alpha<1$, $W_0\geq 0$, $t_0>0$ and $\epsilon>0$, there exists $\delta_0>0$ so that if $W_0-\delta_0\leq E(u(t_0,\cdot))\leq E(u_0)\leq W_0+\delta_0$, then $\dist_\alpha(u(t_0,\cdot),G)<\epsilon$.
\end{prop}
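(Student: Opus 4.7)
The natural strategy is a smooth--compactness argument by contradiction. Suppose the proposition fails for some $\epsilon_0 > 0$; then there exist $\delta_k \to 0$, initial data $u_0^k \in H^1(S^1, M)$ and corresponding heat flows $u^k$ of (\ref{Firsteqn}) with
\[W_0 - \delta_k \leq E(u^k(t_0, \cdot)) \leq E(u_0^k) \leq W_0 + \delta_k\]
yet $\dist_\alpha(u^k(t_0, \cdot), G) \geq \epsilon_0$ for all $k$. The pinching of the energies together with Lemma \ref{SecondLem} gives
\[\int_0^{t_0}\!\!\int_{S^1} |u^k_t|^2 \, d\theta \, dt = E(u_0^k) - E(u^k(t_0, \cdot)) \To 0,\]
so the sequence is ``almost stationary'' in $L^2$ on $[0,t_0]\times S^1$, which is the mechanism that should force the time-$t_0$ slice to be nearly a closed geodesic.

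The next step is to establish uniform-in-$k$ smooth estimates for $u^k$ on a collar of $\{t_0\}\times S^1$. Since $E(u_0^k) \leq W_0 + 1$ for $k$ large, the pointwise bound (\ref{Fiftheqn}) gives $|u^k_\theta|^2 \leq 2(W_0+1)/t_0$ on $[t_0/2, t_0]\times S^1$, and the same parabolic bootstrap used to extend the flow past $T_0$ in the proof of Theorem \ref{FirstMainThm} (Proposition 7.18 of \cite{Lie} followed by induction) supplies uniform bounds on all derivatives of $u^k$ on $[3t_0/4, t_0] \times S^1$ with constants depending only on $M$, $W_0$ and $t_0$. Arzela--Ascoli plus a diagonalization then extract a subsequence converging in $C^\infty([3t_0/4,t_0]\times S^1, M)$ to a smooth limit $u^\infty$ that still solves (\ref{Firsteqn}); passing to the limit in the display above yields $u^\infty_t \equiv 0$ on this strip, so $u^\infty(t_0,\cdot)$ is a classical solution of $u^\infty_{\theta\theta}=A_{u^\infty}(u^\infty_\theta,u^\infty_\theta)$, i.e.\ a closed geodesic.

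The contradiction is now immediate: smooth convergence forces $u^k(t_0,\cdot) \To u^\infty(t_0,\cdot) \in G$ in $C^{1,\alpha}(S^1)$, so $\dist_\alpha(u^k(t_0,\cdot), G) \to 0$, contrary to the standing assumption. The one technical point to verify carefully is that every constant appearing in the a priori estimates of Section 2 depends only on $M$, $E(u_0)$ and the time interval, so that the uniform energy bound $E(u_0^k) \leq W_0 + 1$ really does deliver uniform smooth bounds across the whole sequence; once this is granted, the proof reduces to the familiar gradient-flow slogan that nearly-critical points of $E$ are nearly closed geodesics.
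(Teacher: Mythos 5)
Your proof is correct and follows essentially the same compactness-by-contradiction scheme as the paper: pinched energy forces $\int|u_t|^2\to 0$, uniform parabolic estimates give compactness, and the limit slice is a closed geodesic, contradicting $\dist_\alpha \ge \epsilon_0$. The only difference is cosmetic: the paper applies a local maximum estimate (Theorem 7.36 in \cite{Lie}) to the evolution inequality (\ref{Twelftheqn}) for $|u_t|^2$ to get a pointwise bound $\sup_\theta|u^j_t|(t_0,\theta)\to 0$ and hence a uniform $C^2$ bound on the time-$t_0$ slice, whereas you bootstrap to uniform $C^\infty$ bounds on a collar and transfer the integral bound to the smooth limit.
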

\begin{proof}
If Proposition \ref{FirstProp} fails, then there exist $0<\alpha<1$, $W_0\geq 0$, $t_0>0$, $\epsilon>0$, and a sequence of solutions $u^j$ of the harmonic map heat flow satisfying that $W_0-1/j\leq E(u^j(t_0,\cdot))\leq E(u^j_0)\leq W_0+1/j$ and $\dist_\alpha(u^j(t_0,\cdot),G)\geq\epsilon$. It follows from the evolution equation of $|u^j_t|^2$ (see (\ref{Twelftheqn})), Lemma \ref{SecondLem} and Theorem 7.36 in \cite{Lie} that 
\begin{equation}
\sup_{\theta\in S^1}|u^j_t|^2(t_0,\theta)\leq 
C\left[E(u^j(t_0/2,\cdot))-E(u^j(t_0,\cdot))\right],
\end{equation}
where $C$ depends on $M$, $t_0$ and $W_0$.
Thus $\sup_{\theta\in S^1}|u^j_t|(t_0,\theta)\To 0$ and it follows from (\ref{Fiftheqn}) that $\norm{u^j(t_0,\cdot)}_{C^2(S^1)}$ is uniformly bounded by constants depending on $M$, $t_0$ and $W_0$. Therefore by Arzela-Ascoli's Theorem and Theorem 1.5.1 in \cite{He}, there exists a subsequence (relabelled) $u^j(t_0,\cdot)$ converges to $u^\infty$ in $C^{1,\alpha}(S^1,M)$ and $u^\infty$ is a closed geodesic in $M$. This is a contradiction. 
\end{proof} 
Let $\sigma$ be a sweepout on closed manifold $M$ and $\sigma^j$ be a minimizing sequence of sweepouts in $\Omega_\sigma$, that is 
\begin{equation}
W\leq \max_{s\in [-1,1]}E(\sigma^j(s,\cdot))\leq W+1/j.
\end{equation}
Applying the harmonic map heat flow to each slice of $\sigma^j$, we get a map $\Phi^j:[-1,1]\times [0,\infty)\times S^1\To M$ and for each $s\in [-1,1]$ fixed, $\Phi^j(s,t,\theta)$ solves $(\ref{Firsteqn})$ with $\Phi^j(s,0,\theta)=\sigma^j(s,\theta)$. It follows from the 
proof of the long time existence and uniqueness of the solution of (\ref{Firsteqn}) that for any $t_0\geq 0$, the map $s\To \Phi(s,t_0,\cdot)$ is continuous from $[-1,1]$ to $H^1(S^1,M)$ and thus $\Phi^j(\cdot,t_0,\cdot)$ is still a sweepout on $M$.
Since $[-1,1]$ is compact, for any $\epsilon>0$, there exists $\delta>0$ so that: if $0\leq t_1<t_2\leq t_0$ and $t_2-t_1<\delta$, then $\int_{t_1}^{t_2}\int_{S^1}|\Phi_t^j(s,t,\theta)|^2d\theta dt<\epsilon$ for any $s\in [-1,1]$.
Hence by Lemma \ref{FourthLem}, for any $t_0>0$, $\Phi^j(\cdot,t_0,\cdot)$ is homotopic to $\sigma^j$. Therefore it follows from Proposition \ref{FirstProp} that the $\Phi^j(\cdot,t_0,\cdot)$ are good sweepouts on $M$. That is,
\begin{thm}
\label{SecondMainThm} Given $0<\alpha<1$, $t_0>0$ and $\epsilon>0$, there exists $\delta>0$ so that if $j>1/\delta$ and $s\in [-1,1]$ satisfies $E(\Phi^j(s,t_0,\cdot))\geq W-\delta$ \footnote[2]{Such $s$ exists, since $W\leq\max_{s\in [-1,1]}E(\Phi^j(s,t_0,\cdot))\leq W+1/j$.}, then $\dist_\alpha(\Phi^j(s,t_0,\cdot),G)<\epsilon$.
\end{thm}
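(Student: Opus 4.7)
The plan is to invoke Proposition \ref{FirstProp} directly, with $W_0=W$. Three ingredients need to be lined up: (i) for each fixed $s$, the map $t\mapsto \Phi^j(s,t,\cdot)$ solves (\ref{Firsteqn}) with initial data $\sigma^j(s,\cdot)$, by construction; (ii) because $\sigma^j$ is $(1/j)$-minimizing for the width in $\Omega_\sigma$, the initial energy obeys $E(\sigma^j(s,\cdot))\leq W+1/j$ uniformly in $s$; and (iii) the energy is non-increasing along the harmonic map heat flow by Lemma \ref{SecondLem}, which transfers the upper bound on the initial energy to an upper bound on $E(\Phi^j(s,t_0,\cdot))$ automatically.

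Concretely, given $\alpha\in(0,1)$, $t_0>0$, and $\epsilon>0$, let $\delta_0>0$ be the constant supplied by Proposition \ref{FirstProp} with $W_0=W$ (depending on $\alpha$, $W$, $t_0$ and $\epsilon$), and set $\delta=\delta_0$. Then for any $j>1/\delta$ and any $s\in[-1,1]$ with $E(\Phi^j(s,t_0,\cdot))\geq W-\delta$, combining (ii) and (iii) gives
\[
W-\delta_0 \;\leq\; E(\Phi^j(s,t_0,\cdot)) \;\leq\; E(\sigma^j(s,\cdot)) \;\leq\; W+\tfrac{1}{j} \;<\; W+\delta_0,
\]
which is precisely the hypothesis of Proposition \ref{FirstProp} for the solution $u(t,\theta)=\Phi^j(s,t,\theta)$. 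The conclusion $\dist_\alpha(\Phi^j(s,t_0,\cdot),G)<\epsilon$ follows immediately. Since the bound on $\delta$ depends only on $\alpha$, $W$, $t_0$ and $\epsilon$, and $W$ is a fixed number attached to the homotopy class $\Omega_\sigma$, this is the uniform $\delta$ claimed in the theorem.

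No genuine analytic obstacle remains at this stage. The substantive content has already been carried out: the gradient bound (\ref{Fiftheqn}), the evolution (\ref{Twelftheqn}) of $|u_t|^2$, and the Arzela--Ascoli/$C^{1,\alpha}$-compactness step which together prove Proposition \ref{FirstProp}; and the $H^1$-continuity Lemma \ref{FourthLem}, which was invoked in the paragraph preceding the theorem to confirm that $\Phi^j(\cdot,t_0,\cdot)$ is homotopic to $\sigma^j$ and hence that $W$ is the correct comparison value. What remains in the present theorem is the bookkeeping step of matching the two-sided energy window required by Proposition \ref{FirstProp} with the one-sided hypothesis $E(\Phi^j(s,t_0,\cdot))\geq W-\delta$ available here, the complementary upper bound being automatic from energy monotonicity. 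I would present the argument as the single short display above.
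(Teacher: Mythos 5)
Your argument is correct and matches the paper's: the paper also deduces Theorem~\ref{SecondMainThm} from Proposition~\ref{FirstProp} with $W_0=W$, using the minimizing-sequence bound $E(\sigma^j(s,\cdot))\le W+1/j$ and energy monotonicity (Lemma~\ref{SecondLem}) to land in the two-sided window the proposition requires, having first established (via Lemma~\ref{FourthLem} and compactness of $[-1,1]$) that $\Phi^j(\cdot,t_0,\cdot)$ is still a sweepout homotopic to $\sigma^j$ so that $W$ is the right comparison value.
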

In \cite{CM1}, Colding and Minicozzi show that the width is non-negative and is positive if $\sigma$ is not the zero element in $\pi_2(M)$. In fact, assume that $W(\Omega_\sigma)=0$ and $\hat{\sigma}\in\Omega_\sigma$ is such that the energy of each slice of $\hat{\sigma}$ is sufficiently small. Then each slice, $\hat{\sigma}(s,\cdot)$, is contained in a strictly convex neighborhood of $\hat{\sigma}(s,\theta_0)$ and note that $s\To\hat{\sigma}(s,\theta_0)$ is a continuous curve on $M$. Hence a geodesic homotopy connects $\hat{\sigma}$ to a path of point curves and thus $\hat{\sigma}$ is homotopically trivial. Since $G$ is closed in the $H^1(S^1,M)$ topology, we have
\begin{cor}
\label{FirstCor}
If $M$ is a closed Riemannian manifold and $\pi_2(M)\neq\{0\}$, then there exists at least one non-trivial closed geodesic on $M$.
\end{cor}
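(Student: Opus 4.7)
The plan is to apply Theorem \ref{SecondMainThm} to a sweepout representing a non-trivial element of $\pi_2(M)$ and read off a non-trivial closed geodesic from the resulting tightened sweepouts. Since $\pi_2(M)\neq\{0\}$, I would first pick a sweepout $\sigma$ whose induced map $\tilde{\sigma}\colon S^2\To M$ represents a non-trivial class, and then verify that the width $W := W(\Omega_\sigma)$ is strictly positive.

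The strict positivity $W>0$ is the contrapositive of the observation made just before the corollary: if $W=0$, one can find $\hat{\sigma}\in\Omega_\sigma$ all of whose slices have energy as small as desired. Once these energies are below a constant depending on the convexity radius of $M$, each slice $\hat{\sigma}(s,\cdot)$ is contained in a strictly convex geodesic ball around $\hat{\sigma}(s,\theta_0)$, and the geodesic retractions in these balls assemble into a continuous homotopy (in $C^0([-1,1],H^1(S^1,M))$) from $\hat{\sigma}$ to the point-curve family $s\mapsto\hat{\sigma}(s,\theta_0)$. The latter is homotopically trivial, contradicting the choice of $\sigma$.

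Granted $W>0$, I would then take a minimizing sequence $\sigma^j\in\Omega_\sigma$ with $\max_s E(\sigma^j(s,\cdot))\leq W+1/j$, run the harmonic map heat flow slicewise to form $\Phi^j$ as constructed just before Theorem \ref{SecondMainThm}, and fix some $t_0>0$ and $\alpha\in(0,1)$. Since each $\Phi^j(\cdot,t_0,\cdot)$ remains in $\Omega_\sigma$, there exist $s_j\in[-1,1]$ with $W\leq E(\Phi^j(s_j,t_0,\cdot))\leq W+1/j$. A diagonal application of Theorem \ref{SecondMainThm} with $\epsilon_n\to 0$ then yields closed geodesics $\tilde{\gamma}_{j_n}\in G$ satisfying $\norm{\Phi^{j_n}(s_{j_n},t_0,\cdot)-\tilde{\gamma}_{j_n}}_{C^{1,\alpha}(S^1)}\to 0$.

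Finally, this $C^{1,\alpha}$ proximity forces $E(\tilde{\gamma}_{j_n})-E(\Phi^{j_n}(s_{j_n},t_0,\cdot))\to 0$, and since the second energies tend to $W>0$, we conclude $E(\tilde{\gamma}_{j_n})\to W>0$. Hence for all sufficiently large $n$, the closed geodesic $\tilde{\gamma}_{j_n}$ has positive energy and is therefore non-trivial. I expect the only delicate step to be the first one---ruling out $W=0$ via the convex-neighborhood homotopy---since it requires care with the uniform choice of convexity radius across $s$ and the verification that the retraction is continuous into $H^1(S^1,M)$; granted that, Theorem \ref{SecondMainThm} delivers the non-trivial closed geodesic with essentially no further work.
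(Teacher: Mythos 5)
Your proposal is correct and follows the same route as the paper: establish $W>0$ for a non-trivial sweepout via the convex-neighborhood retraction (the observation the paper attributes to Colding--Minicozzi in the paragraph preceding the corollary), then run the heat flow on a minimizing sequence and invoke Theorem \ref{SecondMainThm}. The only stylistic difference is in the final step: the paper finishes by noting that $G$ is closed in the $H^1(S^1,M)$ topology, whereas you bypass any limit-extraction by observing directly that the approximating geodesics $\tilde{\gamma}_{j_n}$ inherit energy close to $W>0$ from the $C^{1,\alpha}$ proximity, hence are non-constant for large $n$; this is an equally valid and arguably slightly more self-contained conclusion of the same argument.
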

\begin{rem}
\label{SecondRem}
Instead of using the unit interval $[-1,1]$ as the parameter space for the curves in the sweepout and assuming that the curves start and end in point curves, we could have used any compact space $\mathcal{P}$ and required that the curves are constants on $\partial\mathcal{P}$. In this case, $\Omega^{\mathcal{P}}$ is the set of continuous maps $\sigma: \mathcal{P}\times S^1\To M$ so that for
each $s\in\mathcal{P}$ the curve $\sigma(s,\cdot)$ is in $H^1(S^1,M)$, the map $s\To\sigma(s,\cdot)$ is continuous from $\mathcal{P}$ to $H^1(S^1,M)$, and
finally $\sigma$ maps $\partial\mathcal{P}$ to point curves. Given $\sigma\in\Omega^{\mathcal{P}}$, the homotopy class $\Omega^{\mathcal{P}}_\sigma$ is the set of maps $\hat{\sigma}\in\Omega^{\mathcal{P}}$ that are homotopic to $\sigma$ through maps in $\Omega^{\mathcal{P}}$. And the width $W=W(\Omega^{\mathcal{P}}_\sigma)$ is defined by 
\begin{equation}
W=\inf_{\hat{\sigma}\in\Omega^{\mathcal{P}}_\sigma}
\max_{s\in\mathcal{P}}E(\hat{\sigma}(s,\cdot)).
\end{equation}
Theorem \ref{SecondMainThm} holds for general parameter space; the proof is virtually the same when $\mathcal{P}=[-1,1]$.
\end{rem}


\begin{thebibliography}{10}

\bibitem{B1}
G.D. Birkhoff.
\newblock Dynamical systems with two degrees of freedom.
\newblock {\em TAMS}, 18(2):199--300, 1917.

\bibitem{B2}
G.D. Birkhoff.
\newblock {\em \emph{Dynamical systems}}, volume~9.
\newblock AMS Colloq. Publ., Providence, RI, 1927.

\bibitem{CD}
T.H. Colding and C.~De Lellis.
\newblock The min-max construction of minimal surfaces.
\newblock In {\em \emph{Surveys in differential geometry, Vol.8 (Boston, MA,
  2002)}}, pages 75--107. Int. Press, Somerville, MA, 2003.

\bibitem{CM3}
T.H. Colding and W.P. Minicozzi.
\newblock Estimates for the extinction time for the \text{R}icci flow on
  certain 3-manifolds and a question of \text{P}erelman.
\newblock {\em J. Amer. Math. Soc.}, 18(3):561--569, 2005.

\bibitem{CM2}
T.H. Colding and W.P. Minicozzi.
\newblock Width and finite extinction time of \text{R}icci flow.
\newblock {\em Geom. Topol.}, 12(5):2537--2586, 2008.

\bibitem{CM1}
T.H. Colding and W.P. Minicozzi.
\newblock Width and mean curvature flow.
\newblock {\em Geom. Topol.}, 12(5):2517--2535, 2008.

\bibitem{Cr}
C.B. Croke.
\newblock Area and the length of the shortest closed geodesic.
\newblock {\em J. Diff. Geom.}, 27(1):1--21, 1988.

\bibitem{G}
M.A. Grayson.
\newblock Shortening embedded curves.
\newblock {\em Ann. of Math. (2)}, 129(1):71--111, 1989.

\bibitem{Ha}
R.S. Hamilton.
\newblock {\em \emph{Harmonic maps of manifolds with boundary}}, volume 471 of
  {\em \emph{Lecture Notes in Mathematics}}.
\newblock Springer-Verlag, Berlin-New York, 1975.

\bibitem{He}
F.~H\'{e}lein.
\newblock {\em \emph{Harmonic maps, conservation laws and moving frames}},
  volume 150 of {\em \emph{Cambridge Tracts in Mathematics}}.
\newblock Cambridge University Press, second edition, 2002.

\bibitem{Lie}
G.M. Lieberman.
\newblock {\em \emph{Second order parabolic differential equations}}.
\newblock World Scientific Publishing Co., Inc., River Edge, NJ, 1996.

\bibitem{Lin}
L.~Lin.
\newblock Closed geodesics in \text{A}lexandrov spaces of curvature bounded
  from above.
\newblock {\em preprint}.

\bibitem{OT}
S.K. Ottarsson.
\newblock Closed geodesics on \text{R}iemannian manifolds via the heat flow.
\newblock {\em J. Geom. Phys.}, 2(1):49--72, 1985.

\bibitem{P}
G.~Perelman.
\newblock Finite extinction time for the solutions to the \text{R}icci flow on
  certain three manifolds.
\newblock math.DG/0307245.

\bibitem{S1}
M.~Struwe.
\newblock On the evolution of harmonic mappings of \text{R}iemannian surfaces.
\newblock {\em Comment. Math. Helv.}, 60(4):558--581, 1985.

\bibitem{S2}
M.~Struwe.
\newblock {\em \emph{Variational methods. Applications to nonlinear partial
  differential equations and Hamiltonian systems}}, volume~34 of {\em
  \emph{Ergebnisse der Mathematik und ihrer Grenzgebiete 3. Folge. Band 34. A
  Series of Modern Surveys in Mathematics}}.
\newblock Springer-Verlag, Berlin, second edition, 2002.

\end{thebibliography}
\end{document}